\newtheorem{theorem}{Theorem}
\newtheorem{corollary}[theorem]{Corollary}
\newtheorem{definition}{Definition}
\newtheorem{claim}{Claim}
\title{Alpay Algebra: A Universal Structural Foundation}
\author{Faruk Alpay, Independent Researcher \\
ORCID: \href{https://orcid.org/0009-0009-2207-6528}{0009-0009-2207-6528}}
\date{\today}
\begin{document}

\maketitle

\begin{abstract}
Alpay Algebra is introduced as a self-contained axiomatic framework with the ambition of serving as a universal foundation for mathematics. Developed in the spirit of Bourbaki's structural paradigm and Mac Lane's emphasis on form and function, Alpay Algebra posits a single abstract system from which diverse mathematical domains emerge. We present the precise axioms defining Alpay Algebra and develop its core constructs – including a recursive transformation operator $\phi$, its transfinite iteration $\phi^{\infty}$, an iterative state hierarchy $\chi_{\lambda}$, a limit object $\Xi_{\infty}$, and an evaluation functional $\psi_{\lambda}$. From these primitives, we rigorously rebuild key fields: we derive fixed-point theorems and internal stability results, realize category theory within the algebra by interpreting compositional morphisms as iterative state transitions, recast homological algebra through cycles and invariants of the recursion, and outline an internal logic akin to topos theory grounded in stable truth values emerging from $\psi_{\lambda}$. All definitions, theorems, and proofs are given entirely within the Alpay Algebra system without appeal to external frameworks. The development showcases how Alpay Algebra can subsume algebraic geometry, category theory, homological algebra, logic (including topos theory), and general structural mathematics under one unifying language. We conclude by highlighting new conjectures and problems that naturally arise from this universal algebraic perspective, underscoring the foundational depth and future potential of Alpay Algebra.
\end{abstract}

\section{Introduction}

Mathematics in the twentieth century came to be characterized by its emphasis on \textit{structure}, an outlook epitomized by the work of Nicolas Bourbaki. Bourbaki famously identified three "mother-structures" – the algebraic, order, and topological structures – as foundational archetypes from which much of mathematics can be derived. This structural vision unified broad domains under common abstract frameworks. In parallel, alternative foundational viewpoints were proposed. For instance, Mac Lane argued that the traditional set-theoretic foundation (built on membership relations) could be replaced by a focus on functions and compositional relationships. In his view, much of mathematics is inherently \textit{dynamic}, dealing with morphisms between objects, and thus a category-theoretic foundation built on compositions is both natural and powerful. These perspectives championed the idea that a few high-level abstractions could organize and generate the vast landscape of mathematical concepts.

\textbf{Alpay Algebra} advances this unifying paradigm one step further. It proposes a single axiomatic system that not only encompasses Bourbaki's structural archetypes and Mac Lane's categorical worldview, but does so from within a \textit{self-referential, recursive} algebraic language. The guiding philosophy of Alpay Algebra is that \textit{iterative transformation} and \textit{self-evolution} form the ultimate foundation of mathematical structures. Rather than taking static elements or even static morphisms as primary, Alpay Algebra treats the process of continual change – and the eventual stabilization of that process – as the primitive notion from which all else unfolds. In this sense, it merges Bourbaki's austere axiomatic approach with Mac Lane's clarity of mathematical purpose: we begin with a few simple symbols and rules, then build a rich universe of structures through formal iteration and abstraction.

The goal of this paper is to develop a complete, self-contained theoretical framework for Alpay Algebra and to demonstrate its universality across key mathematical domains. We proceed as follows. In \textbf{§Axiomatic Foundation}, we present the fundamental definitions and axioms of Alpay Algebra as an independent system. All subsequent theory is developed using only these axioms. In \textbf{§Core Constructs and Operators}, we introduce the central constructs of the theory – notably the iterative operator $\phi$ and its transfinite iteration $\phi^{\infty}$, the hierarchical state sequence $\chi_{\lambda}$, the limit object $\Xi_{\infty}$, and the evaluative functional $\psi_{\lambda}$ – and establish their basic properties. With these tools, we prove in \textbf{§Fixed Points and Internal Stability} the existence of fixed points (stable states) under general conditions and explore conditions for internal stability of the iterative processes.

In \textbf{§Categorical Realization}, we reformulate category theory within Alpay Algebra, showing that the pattern of objects and morphisms arises naturally from the iterative structure of the algebra. Functors and natural transformations find analogs in the mappings between evolving states, and we demonstrate that the category of Alpay Algebra structures captures the usual categorical composition law \textit{entirely inside} the system's own language. \textbf{§Homological Structures and Invariants} then recasts homological algebra in terms of Alpay Algebra: cycles correspond to self-reinforcing iterative loops, boundaries correspond to trivial evolutions, and homology groups emerge as quotient invariants of these processes. In \textbf{§Logic and Topos Theory in Alpay Algebra}, we show that Alpay Algebra possesses an internal logic. We construct an analog of a topos – an internal universe of evolving substructures with a built-in notion of truth values ($\psi_{\lambda}$) that become stable at fixed points – thereby capturing logical and set-theoretic reasoning within the algebra.

Finally, in \textbf{§Emergent Problems and Conjectures}, we outline new classes of problems suggested by Alpay Algebra. These include conjectures on the universality and completeness of the system, the classification of its models, and the existence of ultimate fixed points or "universal" objects. We conclude in \textbf{§Conclusion} with reflections on the foundational power of Alpay Algebra and its potential implications for the future unification of mathematics.

Throughout, the development maintains a high level of rigor and abstraction. All definitions and proofs adhere to a strict formal style, while extensive commentary illustrates how classical mathematical concepts are reinterpreted in this new framework. We cite only two sources: Bourbaki's \textit{Elements of Mathematics} for its structural approach, and Mac Lane's \textit{Mathematics: Form and Function} for its insight into foundational formalisms. These serve as philosophical bookends to our work, reinforcing that Alpay Algebra realizes the structural clarity they advocated – albeit within a single universal algebraic system. By standing on the shoulders of these giants, we aim to let the internal coherence and reach of Alpay Algebra speak for itself, without recourse to any external machinery.

\section{Axiomatic Foundation}

We begin by formulating the axioms of Alpay Algebra. An \textbf{Alpay Algebra} is an abstract mathematical structure defined by a collection of primitive sets, operations, and relations satisfying the axioms below. The entire theory will be built on these assumptions alone. We use standard logical notation for axioms (quantifiers, implication, etc.), but emphasize that these axioms are to be understood as intrinsic to the Alpay Algebra system, not relying on any set theory or background logic beyond the usual logical inference rules.

\begin{definition}[Primitive Components of Alpay Algebra]
An Alpay Algebra $\mathcal{A}$ consists of the following components:

\begin{itemize}[itemsep=0.5\baselineskip]
\item A class (or set) $X$, whose elements are called \textbf{states}. Intuitively, $X$ represents the universe of mathematical entities or configurations within this algebra. (We make no assumption here that $X$ is a set in the ZF sense; we treat it as a collection internal to $\mathcal{A}$.)

\item A set (or class) $A$, whose elements are called \textbf{adjustments} or \textbf{difference elements}. These represent transformations or steps that modify states. We will write elements of $A$ as $a, b, c, \ldots$ and often denote a distinguished \textbf{neutral adjustment} $0 \in A$ (to be axiomatized below as the identity element).

\item A binary operation $+: X \times A \to X$, called the \textbf{state update} operation. We denote $x + a$ as the result of applying adjustment $a \in A$ to state $x \in X$. This operation formalizes the idea of moving an existing state by some "difference" to yield a (possibly new) state. (Later we will allow iterative application of this operation to trace state evolution.)

\item A function $\phi: X \to A$, called the \textbf{adaptive rule} or \textbf{update rule}. For each state $x \in X$, $\phi(x)$ produces an adjustment in $A$ that is considered the "preferred" or "intrinsic" transformation at $x$. This rule is a key component of the algebra: it encodes how the system evolves from any given state.
\newpage
\item An evaluation function $\Psi: X \to E$, where $(E, \le)$ is a totally ordered set (or class), possibly extended with a top element $\top$ denoting an optimal value. We denote $\Psi(x)$ as $\psi_x$ or $\psi(x)$ for convenience, and often refer to this value as the \textbf{measure}, \textbf{performance}, or \textbf{truth value} of state $x$. Intuitively, $\psi_x$ evaluates the "quality" or "truth" of state $x$ with respect to some intrinsic criterion. The nature of $E$ is left abstract, but one may keep in mind examples such as real numbers (for performance metrics), ordinals (for rank or stage of construction), or the boolean values $\{\text{false} < \text{true}\}$ (for logical truth). The evaluation function will allow us to discuss improvement and convergence of states.
\end{itemize}
\end{definition}

We now list the axioms governing these components. The axioms ensure that $(A, +, 0)$ forms a monoid under addition, that $+$ acts on $X$ in a coherent way, and that $\phi$ and $\Psi$ interact to guide an iterative process on $X$ that tends toward optimal states. We enumerate the axioms for clarity:

\begin{enumerate}[itemsep=0.5\baselineskip]
\item \textbf{[Monoid of Adjustments]} $(A, +)$ is a commutative monoid with identity element $0$. That is:
   \begin{itemize}[itemsep=0.3\baselineskip]
   \item (Closure) For all $a, b \in A$, the sum $a + b \in A$.
   \item (Associativity) For all $a, b, c \in A$, $(a + b) + c = a + (b + c)$.
   \item (Identity) There exists an element $0 \in A$ (the \textbf{zero adjustment}) such that for all $a \in A$, $a + 0 = 0 + a = a$.
   \item (Commutativity) For all $a, b \in A$, $a + b = b + a$.
   \end{itemize}

   \textit{Comment:} We include commutativity as part of the axiom since many natural instances of adjustments (e.g., numerical increments) commute; however, non-commutative generalizations are conceivable. Commutativity will simplify the interpretation of certain structures (like homology) in the sequel. If needed, one may drop commutativity and still develop much of the theory.
\newpage
\item \textbf{[Action on States]} The operation $+$ on $X \times A$ satisfies:
   \begin{itemize}[itemsep=0.3\baselineskip]
   \item (Identity action) $x + 0 = x$ for all $x \in X$. Applying a zero adjustment leaves any state unchanged.
   \item (Compatibility / Associativity) $ (x + a) + b = x + (a + b)$ for all $x \in X$ and $a, b \in A$. In other words, applying adjustment $a$ then $b$ to state $x$ has the same net effect as applying the composite adjustment $a + b$ in one step.
   \end{itemize}

   These two properties mean that the map $+: X \times A \to X$ endows $X$ with a (left) action of the monoid $A$. Equivalently, for each fixed $a \in A$, the map $x \mapsto x + a$ is a function $X \to X$, and we have $x + (a + b) = (x + a) + b$. We do \textit{not} assume that this action is transitive or free; $A$ might not connect all states (there could be distinct components of $X$ that are not reachable from one another via adjustments). We also do not assume invertibility of adjustments (so we are not requiring $A$ to be a group, only a monoid).

\item \textbf{[Well-Defined Update Rule]} For every state $x \in X$, $\phi(x) \in A$ is a uniquely defined adjustment. No further restrictions are placed on $\phi$ at the axiomatic level, but conceptually $\phi(x)$ is intended to represent an "optimal" or "intrinsic" direction of change at $x$. We will often denote $\Delta_x := \phi(x)$ for the adjustment recommended by $\phi$ at $x$. (The symbol $\Delta$ evokes a \textit{difference} or change.)

\item \textbf{[Evaluation and Progress]} The evaluation function $\Psi: X \to E$ is such that $E$ is totally ordered, and we require that the update rule $\phi$ is \textit{adaptive in the direction of improving $\Psi$}. Formally, we assume there is a binary relation "$\prec$" on $X$ (signifying "more advanced than") induced by the evaluation: for $x, y \in X$, define $x \prec y$ iff $\Psi(x) < \Psi(y)$ in the order of $E$. Then the axiom is: for every state $x$ for which $\phi(x) \neq 0$, the updated state is strictly "more advanced" than the original, i.e.
   $\phi(x) \neq 0 \quad \implies \quad x \prec (x + \phi(x)).$
   \newpage
   In words, if $\phi$ recommends a non-trivial adjustment at $x$, then applying that adjustment yields a state with strictly higher evaluation value. Equivalently, as long as $x$ is not yet a stable optimum, the update will produce an improvement in the evaluation measure. We also assume that if $\phi(x) = 0$, then $x$ is \textit{locally optimal} or stable, meaning no further improvement is indicated: for all $a \in A$, $\Psi(x + a) \le \Psi(x)$ (so $x$ is a maximal element in its reachable neighborhood). In particular, $\phi(x) = 0$ implies $x$ is a \textbf{fixed point} of the update dynamics (formalized further in the next axiom).

\item \textbf{[Fixed-Point Axiom]} A state $x^* \in X$ is called a \textbf{fixed point} if $\phi(x^*) = 0$. Equivalently, $x^*$ is a state that yields the zero adjustment (no change). By Axiom 4, $x^*$ then satisfies $\Psi(x^*) \ge \Psi(x)$ for all states $x$ that are close (reachable by a single adjustment). We postulate that fixed points correspond to \textbf{stable states} of the system. Moreover, we assume that if a state $x$ is \textit{globally optimal} in terms of evaluation (i.e. $\Psi(x)$ is a maximal element of $\Psi(X) \subseteq E$), then $x$ must be a fixed point ($\phi(x) = 0$). This reflects that once the highest possible performance or truth is attained, the system has no impetus to change further.

   \textit{Note:} The existence or uniqueness of fixed points is not guaranteed by axioms alone, and indeed these can vary between different models of Alpay Algebra. However, much of our theory will explore conditions ensuring existence (cf. Theorem 1 in the next section) or properties of fixed points when they exist.

\item \textbf{[Initial State]} (Optional axiom/schema) Many scenarios of interest consider an initial starting state from which the system evolves. While not strictly necessary as a global axiom, we can augment our setup with the assumption that some $x_0 \in X$ is designated as an \textbf{initial state}. This allows us to discuss the specific iterative trajectory $x_0, x_1, x_2, \ldots$ generated by repeatedly applying the update rule. In an abstract algebraic development, one might leave $x_0$ arbitrary or consider all possible starting states; however, having an initial state in mind can simplify discussions of convergence and category-theoretic constructions (where it often plays the role of a "basepoint").
\end{enumerate}
\newpage
The above axioms define Alpay Algebra in purely internal terms. They lay the groundwork for a theory of iterative transformations in which states evolve according to $\phi$ and are measured by $\Psi$. A few immediate consequences and remarks can be made:

\begin{itemize}[itemsep=0.5\baselineskip]
\item \textbf{Monotonic Improvement:} By Axiom 4, the sequence of evaluation values $\Psi(x_0), \Psi(x_1), \Psi(x_2), \ldots$ obtained by following the update rule ($x_{\lambda+1} = x_\lambda + \phi(x_\lambda)$) is strictly increasing (until it possibly stabilizes). This monotonicity is reminiscent of processes that improve a cost function or truth value, ensuring no cycles of strictly worsening states can occur in the directed evolution.

\item \textbf{Transfinite Iteration:} The axioms permit the index set for iterations ($\lambda$ in $x_\lambda$) to be any ordinal or integer, etc. In practice, we often consider $\lambda \in \mathbb{N}$ for sequential iteration. However, one can conceive of transfinite sequences if no fixed point is reached at finite stages. The notion of $\phi^\infty$ introduced later will address the limit of infinite iteration.

\item \textbf{No External References:} Note that we have not referenced any set-theoretic or category-theoretic constructs in these axioms; they are self-contained. This aligns with Mac Lane's idea of starting foundations differently (with functions or, in our case, with transformations) instead of sets. By developing all mathematics from these axioms, Alpay Algebra aims to be a \textit{universal language} in the sense that Bourbaki's structures were – but boiling down to an even more elemental process.
\end{itemize}

We will now build upon this axiomatic basis to introduce key derived notions and then demonstrate how classical mathematical structures can be encoded in or emergent from Alpay Algebra.

\section{Core Constructs and Operators}

From the primitive components and axioms of Alpay Algebra, we define several core constructs that will be used throughout our development. These include notations for iterative application of the update rule, limit objects that capture eventual behavior, and operators that summarize the effect of infinite processes.
\newpage
\begin{definition}[Iterative Sequence of States]
Given an Alpay Algebra $\mathcal{A}$ and an initial state $x_0 \in X$, we define the \textbf{iterative sequence} (or \textbf{trajectory}) $(x_\lambda)_{\lambda \in \mathbb{N}}$ by the recursion:

\begin{itemize}[itemsep=0.3\baselineskip]
\item $x_0$ is the initial state.
\item $x_{\lambda+1} := x_\lambda + \phi(x_\lambda)$ for each $\lambda \geq 0$.
\end{itemize}

Thus $x_1 = x_0 + \phi(x_0)$, $x_2 = x_1 + \phi(x_1) = x_0 + \phi(x_0) + \phi(x_1)$, and in general
$x_n = x_0 + \phi(x_0) + \phi(x_1) + \cdots + \phi(x_{n-1}),$
the cumulative result of applying the recommended adjustments step by step. We refer to $x_\lambda$ as the \textbf{state after $\lambda$ iterations} of the update rule.
\end{definition}

By Axiom 2 (associativity of action), this definition is unambiguous (the sum can be grouped arbitrarily). By Axiom 4 (evaluation improvement), the sequence $(\Psi(x_\lambda))$ is non-decreasing, and in fact strictly increasing until a fixed point is reached. Two qualitative behaviors are possible: either (i) there is some finite $N$ such that $\phi(x_N) = 0$ (a fixed point is reached in finite time), or (ii) $\phi(x_n) \neq 0$ for all finite $n$, in which case the process continues indefinitely, and one may consider its behavior as $n \to \infty$.
\newline
\begin{definition}[Asymptotic State $\Xi_\infty$]
If the iterative sequence $(x_\lambda)$ does not encounter a fixed point in finite time, we consider its infinite progression. Under suitable conditions (e.g. if $E$ is well-ordered or complete in some sense), the sequence may approach a limit. We define $\Xi_\infty$ (pronounced "Xi infinity") as the \textbf{ultimate state} or \textbf{asymptotic state} of the sequence, if it exists. Formally, $\Xi_\infty$ can be thought of as a new symbol adjoined to $X$ representing $\lim_{\lambda \to \infty} x_\lambda$. We emphasize that this is a \textit{conceptual} definition at the moment – existence and uniqueness of $\Xi_\infty$ require proof or additional conditions. In many cases, $\Xi_\infty$ will coincide with a fixed point if the process converges to a stable state. For instance, if $x_n$ approaches some $x^*$ in an appropriate topology or order and $\phi(x^*) = 0$, then $\Xi_\infty = x^*$. In other cases, $\Xi_\infty$ might be an ideal element (e.g. at infinity if the sequence diverges without bound in $E$). In this paper, we primarily consider cases where $\Xi_\infty$ corresponds to a meaningful limit. Intuitively, $\Xi_\infty$ represents the "learned state" or final outcome of infinite adaptation.
\end{definition}
\newpage 
\begin{definition}[Transfinite Update Operator $\phi^\infty$]
We define $\phi^\infty$ (phi to the infinity) as the \textbf{transfinitely iterated update rule} or \textbf{ultimate transformation operator}. This notion can be approached in two related ways:

\begin{itemize}[itemsep=0.3\baselineskip]
\item \textit{As a limit of compositions:} Consider the one-step state transition function $F: X \to X$ given by $F(x) = x + \phi(x)$ (so $F(x)$ is the next state after applying the rule to $x$). Then $F^n$ denotes the $n$-fold composition (with $F^0 = \mathrm{id}_X$). If the sequence $F^n(x_0)$ converges to some $x^*$ as $n \to \infty$, we may regard $F^\infty(x_0) = x^*$. If this happens for all (or many) initial states, one can attempt to define an operator $F^\infty: X \to X$ that sends each initial state to its limiting state. We call this operator $\phi^\infty$ to emphasize its relation to $\phi$. In summary, if $\Xi_\infty$ exists as above, we set $\phi^\infty(x_0) := \Xi_\infty$. By construction, $\phi^\infty(x_0)$ should be a fixed point (if the process converges and stabilizes).

\item \textit{As a meta-level fixed point of $\phi$ itself:} One can consider that the rule $\phi$ might in principle be subject to refinement or self-application. In advanced interpretations of Alpay Algebra, $\phi$ could be seen as an evolving policy that adjusts based on long-term performance. In such a viewpoint, $\phi^\infty$ would represent the stable form of the rule after infinite self-updates. However, in the present work, we do not explicitly model changes to $\phi$; we treat $\phi$ as fixed during the run of the system. Thus, $\phi^\infty$ is better interpreted in the first sense (as the infinite composite $F^\infty$). We will use the notation $\phi^\infty(x)$ to denote the eventual state reached from $x$ under repeated application of $\phi$. When $x_0$ is clear from context, we may simply write $\phi^\infty$ as a transformation that encapsulates "running the process to completion."
\end{itemize}

It is worth noting that $\phi^\infty$, if well-defined for a given initial state, is idempotent: applying $\phi^\infty$ again yields the same result (since once at $\Xi_\infty$, the state does not change further). In many ways, $\phi^\infty$ behaves like a projection onto the set of fixed points or stable outcomes.
\end{definition}

\begin{definition}[Performance Trajectory $\psi_\lambda$]
Alongside the state sequence $(x_\lambda)$, we consider the sequence of evaluation values $\psi_\lambda := \Psi(x_\lambda) \in E$. By Axiom 4, this sequence is monotonic non-decreasing (and typically increasing until a supremum is reached). The notation $\psi_\lambda$ highlights that these values can be seen as a kind of \textit{truth value or performance level at stage $\lambda$}.\newpage In cases where $E$ has a top element $\top$ (e.g. truth "True" or maximum performance), one might expect $\psi_\lambda$ to converge to $\top$ as $\lambda \to \infty$ if the process succeeds in reaching an optimal state. If a limit state $\Xi_\infty$ exists, we denote its evaluation as $\psi_\infty := \Psi(\Xi_\infty)$, which will typically be a maximal value (often $\top$ if attainable).
\end{definition}

The triple $(\chi_\lambda, \Delta_\lambda, \psi_\lambda)$, where $\Delta_\lambda := \phi(x_\lambda)$, thus summarizes the evolution: state $\chi_\lambda$ at step $\lambda$, the adjustment $\Delta_\lambda$ applied, and the performance $\psi_\lambda$ achieved. In particular, a fixed point $x^*$ would satisfy $\Delta_* = \phi(x^*) = 0$ and $\psi_* = \Psi(x^*)$ is a (local, possibly global) maximum.

We now establish some fundamental properties of these constructs. These results are proven entirely within the Alpay framework, although intuitively they parallel familiar theorems (such as convergence under monotone increasing bounded sequences, or existence of maximal elements via Zorn's Lemma).
\newline
\begin{theorem}[Existence of Fixed Points under Monotonicity and Well-Foundedness]
\textit{Let $\mathcal{A}$ be an Alpay Algebra. Assume that the evaluation poset $(E, \le)$ has no infinite ascending chain (for example, $E$ is well-ordered or has the ascending chain condition), and that there exists at least one state $x_0 \in X$. Then the iterative sequence $x_0, x_1, x_2, \ldots$ defined by $x_{\lambda+1} = x_\lambda + \phi(x_\lambda)$ must eventually reach a fixed point. In other words, there exists some finite $N$ such that $\phi(x_N) = 0$. Consequently, a stable state exists in $X$ (reachable from $x_0$).}
\end{theorem}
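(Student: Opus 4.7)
The plan is to argue by contradiction: assume no fixed point is reached along the trajectory, and show that this manufactures an infinite strictly ascending chain in $E$, contradicting the ascending chain condition hypothesized on $(E, \le)$. The argument relies only on Axiom 4 (the evaluation/progress axiom) and the definitions of $x_{\lambda+1}$ and of a fixed point from the preceding section; no external machinery is needed.

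First I would spell out the negation of the conclusion: if no finite $N$ has $\phi(x_N) = 0$, then $\phi(x_\lambda) \neq 0$ for every $\lambda \in \mathbb{N}$. Combining this with the recursion $x_{\lambda+1} = x_\lambda + \phi(x_\lambda)$ and the progress clause of Axiom 4, which asserts $\phi(x) \neq 0 \implies x \prec x + \phi(x)$, I obtain $x_\lambda \prec x_{\lambda+1}$ for every $\lambda$. Unfolding the definition of $\prec$ gives the strict inequality $\Psi(x_\lambda) < \Psi(x_{\lambda+1})$ in the totally ordered evaluation codomain $E$. Iterating this across all $\lambda$ produces the infinite chain
\[
\Psi(x_0) < \Psi(x_1) < \Psi(x_2) < \cdots
\]
in $E$, which is precisely an infinite strictly ascending chain, contradicting the hypothesis that $E$ satisfies the ascending chain condition (equivalently, has no infinite ascending chain). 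Hence some finite $N$ must satisfy $\phi(x_N) = 0$, and by the Fixed-Point Axiom $x_N$ is then a fixed point in $X$ reachable from $x_0$.

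The proof is essentially mechanical once Axiom 4 is invoked correctly, so I do not anticipate a serious obstacle. The one delicate point worth stating carefully is the translation between the two equivalent forms of the well-foundedness hypothesis on $E$: namely, that "no infinite ascending chain" as stated in the theorem coincides, in a totally ordered setting, with the nonexistence of a strictly increasing $\mathbb{N}$-indexed sequence, which is what the iteration furnishes. I would insert a brief remark making this equivalence explicit so that the contradiction lands unambiguously, and I would note in passing that the argument does not require $x_0$ to be distinguished by any axiom beyond mere existence, so the fixed point is located in the specific trajectory issued from the chosen $x_0$.
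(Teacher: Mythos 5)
Your proposal is correct and follows essentially the same argument as the paper's own proof: both hinge on Axiom 4 forcing $\Psi(x_\lambda) < \Psi(x_{\lambda+1})$ whenever $\phi(x_\lambda) \neq 0$, so that the no-infinite-ascending-chain hypothesis on $E$ forces $\phi(x_N) = 0$ at some finite $N$. Your contradiction framing is merely the contrapositive packaging of the paper's direct argument, and your added remark on the equivalence of the two forms of the chain condition is a harmless clarification rather than a divergence.
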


\begin{proof}
Consider the sequence of evaluation values $\psi_0 < \psi_1 < \psi_2 < \cdots$ in $E$ obtained by the update process (strict inequality holds as long as $\phi(x_n) \neq 0$ by Axiom 4). By assumption, $(E, \le)$ has no infinite strictly increasing sequence. Therefore, this chain $\psi_0 < \psi_1 < \psi_2 < \cdots$ cannot continue indefinitely. There must exist some index $N$ where the sequence stabilizes or terminates. Since $\psi_{\lambda+1} > \psi_\lambda$ as long as $\phi(x_\lambda) \neq 0$, stabilization of $\psi$'s means that for some $N$, we no longer have a strict increase. This can only happen if $\phi(x_N) = 0$, because otherwise $\psi_{N+1} > \psi_N$ by the axiom. Thus $\phi(x_N) = 0$, i.e. $x_N$ is a fixed point.

We have shown existence of a fixed point reachable from $x_0$. If multiple independent starting states are considered, one can apply the same argument to each starting state's trajectory (assuming each trajectory's evaluations remain within a well-founded subset of $E$).
\end{proof}

This theorem relies on the absence of infinite ascending chains in $E$, a condition analogous to requiring that every improving sequence has a maximum or that $E$ is well-founded. It ensures that the adaptive process cannot improve indefinitely, thus must hit a point where no further improvement is possible (a fixed point). It is closely related to the idea of \textit{Zorn's Lemma} or \textit{well-ordering theorem} in traditional set-based mathematics, but here it is an internal guarantee in our system: any strictly improving sequence of states must terminate.
\newline
\begin{corollary}
\textit{Under the conditions of Theorem 1, the ultimate state $\Xi_\infty$ coincides with a fixed point $x_N$ for some finite $N$. In particular, $\Xi_\infty = x_N$ and $\phi^\infty(x_0) = x_N$. Moreover, $\Psi(x_N)$ is a maximal value of $\Psi$ on the trajectory, and indeed a local maximum in $X$.}
\end{corollary}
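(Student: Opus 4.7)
The plan is to leverage Theorem 1 to collapse the ostensibly infinite trajectory to a finite one, and then read off each clause of the corollary from the axioms already in hand.

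First I would invoke Theorem 1 to obtain a finite index $N$ with $\phi(x_N) = 0$. Using the identity action clause of Axiom 2, I would then show that $x_{N+1} = x_N + \phi(x_N) = x_N + 0 = x_N$, and by induction that $x_{N+k} = x_N$ for every $k \geq 0$. Hence the trajectory is \emph{eventually constant}, with constant value $x_N$ from step $N$ onward. Once this is established, the identification $\Xi_\infty = x_N$ is immediate from the definition of the asymptotic state: the notion $\lim_{\lambda \to \infty} x_\lambda$ reduces unambiguously to the eventual constant $x_N$, so no topological or order-theoretic subtlety enters. The equality $\phi^\infty(x_0) = x_N$ then follows directly from the definition $\phi^\infty(x_0) := \Xi_\infty$ in Definition 3.

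Next I would address the two maximality claims. For the trajectory maximum, I note that $\psi_0 < \psi_1 < \cdots < \psi_N$ by the strict inequality of Axiom 4 during the phase $\phi(x_\lambda) \neq 0$, and $\psi_{N+k} = \Psi(x_N) = \psi_N$ for all $k \geq 0$ by the constancy just proved. Thus $\psi_N$ dominates every value on the trajectory. For the local maximum claim in $X$, I would appeal directly to the second clause of Axiom 4, which asserts that whenever $\phi(x) = 0$ one has $\Psi(x + a) \leq \Psi(x)$ for every $a \in A$. Applying this to $x_N$, the state $x_N$ is maximal within its one-step reachable neighborhood, which is precisely the sense of \emph{local maximum} intended by the corollary.

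The only genuinely delicate point, and thus the main potential obstacle, is conceptual rather than computational: Definition 2 introduces $\Xi_\infty$ only provisionally, noting that existence and uniqueness require proof or additional conditions. I would therefore take care to argue that under the hypotheses of Theorem 1 the limit is unambiguous precisely \emph{because} the sequence stabilizes in finitely many steps, sidestepping any need for a topology on $X$ or a completeness assumption on $E$. This is the step where a casual reader might expect difficulty, but in fact the eventual-constancy observation disposes of it cleanly.
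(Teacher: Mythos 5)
Your proof is correct and follows essentially the same route as the paper's: Theorem 1 supplies the fixed point $x_N$, eventual constancy of the trajectory gives $\Xi_\infty = x_N$ and $\phi^\infty(x_0) = x_N$ directly from the definitions, and both maximality claims are read off from Axiom 4. Your write-up is in fact somewhat more careful than the paper's terse proof --- notably the explicit induction showing $x_{N+k} = x_N$ via the identity action of Axiom 2, and the explicit observation that finite stabilization makes the provisionally defined $\Xi_\infty$ unambiguous without any topology on $X$ --- but these are elaborations of the same argument, not a different approach.
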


\begin{proof}
Since the sequence attains a fixed point $x_N$, by definition $\Xi_\infty = x_N$. No further change occurs after $N$, so $\phi^\infty(x_0)$, defined as the eventual outcome, equals $x_N$. By Axiom 4, $x_N$ is locally maximal for $\Psi$. If we assumed $E$ has no infinite chain but is not necessarily globally finite, $x_N$ might not be a global maximum of $\Psi$ on all of $X$ (there could be other trajectories tending to higher values), but from $x_0$'s perspective it's the highest reached. In many practical cases, we consider $\Psi(X)$ bounded above (so that a global optimum exists), in which case $x_N$ must achieve that optimum due to monotonicity and termination of the sequence.
\end{proof}

\textbf{Remark:} The requirement of no infinite ascending chain in $E$ can be weakened. Alternatively, one can assume $E$ is complete in the sense that every ascending sequence has a supremum in $E$. In such a case, an infinite sequence $\psi_0 \le \psi_1 \le \cdots$ could approach a limit $L \in E$. If that supremum $L$ is attained for some state in $X$, then a fixed point will exist (namely the state with evaluation $L$). If the supremum is not attained (a limit not achieved by any state's $\Psi$ value), then one might extend $X$ by a new ideal state to attain it, analogous to completing a poset. Exploring these subtleties leads into the territory of transfinite iteration and possibly requires large ordinals or topology. For our purposes, we will mostly assume conditions that guarantee the process halts at a fixed point or converges neatly.
\newpage
\begin{theorem}[Categorical Composition Emergence]
\textit{Every Alpay Algebra inherently defines a small category (in fact, a directed acyclic graph of state transitions, possibly with loops if adjustments permit reversal). In particular, consider the relation of reachability on $X$: for $x, y \in X$, say $x \to y$ if there exists an adjustment $a \in A$ such that $x + a = y$. Then by closing this relation under composition, we obtain a category $\mathcal{C}_{\mathcal{A}}$ where:}

\begin{itemize}[itemsep=0.3\baselineskip]
\item \textit{Objects are the states $X$.}
\item \textit{For any two states $x, y \in X$, the morphisms $\hom(x, y)$ consist of all (finite) sequences of adjustments $a_1, a_2, \ldots, a_n \in A$ such that $y = x + (a_1 + a_2 + \cdots + a_n)$. In particular, each single-step adjustment $a: x \to x+a$ is a morphism, and longer morphisms are formal composites of these.}
\item \textit{Composition of morphisms corresponds to concatenation of sequences (or equivalently addition of the corresponding adjustments, using monoid associativity). Identities are given by the empty sequence (or 0 adjustment) at each state.}
\end{itemize}
\end{theorem}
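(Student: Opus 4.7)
The plan is to verify the three defining properties of a category — well-defined composition, associativity, and identities — by reducing each one to either the monoid structure on $A$ (Axiom 1) or the associative action of $A$ on $X$ (Axiom 2). First I would fix types precisely: a morphism in $\hom(x,y)$ is a finite sequence $\mathbf{a} = (a_1,\ldots,a_n)$ of elements of $A$ with $n \geq 0$ satisfying $x + (a_1 + \cdots + a_n) = y$, where the empty sum is $0$; the composite of $\mathbf{a} \in \hom(x,y)$ with $\mathbf{b} = (b_1,\ldots,b_m) \in \hom(y,z)$ is the concatenation $\mathbf{b} \circ \mathbf{a} := (a_1,\ldots,a_n,b_1,\ldots,b_m)$; and $\mathrm{id}_x$ is the empty sequence at $x$.

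Next I would discharge the three axioms in order. Type-correctness of composition is a direct application of Axiom 2 together with monoid associativity:
\[
x + \bigl((a_1 + \cdots + a_n) + (b_1 + \cdots + b_m)\bigr) = \bigl(x + (a_1 + \cdots + a_n)\bigr) + (b_1 + \cdots + b_m) = y + (b_1 + \cdots + b_m) = z,
\]
so the concatenation indeed lies in $\hom(x,z)$. Associativity of $\circ$ then reduces to associativity of sequence concatenation on finite tuples, which is structural and requires no further appeal to the algebra. For the unit laws, the identity-action clause $x + 0 = x$ of Axiom 2 places the empty sequence in $\hom(x,x)$, and concatenating it on either side of $\mathbf{a}$ leaves $\mathbf{a}$ unchanged as a sequence. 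Smallness of the resulting category follows because both the objects (elements of $X$) and the arrows (finite tuples from $A$) are collections internal to $\mathcal{A}$; note that the commutativity clause of Axiom 1 is not actually used at this stage, in keeping with the remark that commutativity of adjustments may be dropped.

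The main obstacle, and the one non-routine step, is reconciling the two descriptions of morphisms that the statement itself offers: morphisms are presented as \emph{sequences} of adjustments but composition is declared to correspond \emph{equivalently} to addition in $A$. I would resolve this by constructing an identity-on-objects projection functor from the sequence category onto a quotient category $\mathcal{C}_{\mathcal{A}}$ whose morphisms $x \to y$ are the elements $a \in A$ with $x + a = y$, composition $b \circ a := a + b$ (well-defined in $A$ by Axiom 1, type-correct by Axiom 2), and $\mathrm{id}_x = 0$. The projection sends $(a_1,\ldots,a_n)$ to $a_1 + \cdots + a_n$; its functoriality and the category axioms for $\mathcal{C}_{\mathcal{A}}$ again collapse onto Axioms 1 and 2. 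I would adopt the quotient version as the canonical \emph{category of an Alpay Algebra} for use in later sections, since there the zero adjustment becomes literally the identity morphism and no degenerate trailing zeros survive — a cleaner setup for the subsequent identification of functors and natural transformations with state-level operations.
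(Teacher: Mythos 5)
Your proof is correct and takes essentially the same route as the paper's own verification: identities come from the clause $x + 0 = x$ of Axiom 2, associativity of composition reduces via the action axiom $(x+a)+b = x+(a+b)$ to concatenation of adjustment sequences, and commutativity of $A$ is never invoked. Your additional quotient step --- the identity-on-objects projection sending $(a_1,\ldots,a_n)$ to the net adjustment $a_1+\cdots+a_n$ --- does not appear in the paper's proof of this theorem, but it faithfully sharpens an identification the paper itself makes only informally in its later section on categorical reformulation (where sequences are declared ``equivalent if they have the same composite adjustment''), so it is a welcome clarification rather than a divergence.
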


\begin{proof}
We verify the category axioms within $\mathcal{C}_{\mathcal{A}}$:

\begin{itemize}[itemsep=0.3\baselineskip]
\item \textbf{Identity:} For each object (state) $x$, the empty adjustment (or 0) is a valid sequence from $x$ to $x$ (since $x + 0 = x$ by Axiom 2). This serves as the identity morphism $\mathrm{id}_x$. If $x \xrightarrow{(a_1,\ldots,a_n)} y$ is any morphism, then composing $\mathrm{id}_x$ followed by that morphism yields the same sequence $(a_1,\ldots,a_n)$ (since $0 + a_1 + \cdots + a_n = a_1 + \cdots + a_n$), which corresponds to the same resulting state $y$. Similarly, composing the morphism followed by $\mathrm{id}_y$ yields the same sequence. Thus identity acts neutrally as required.

\item \textbf{Associativity:} Take three composable morphisms: $x \xrightarrow{\alpha} y$, $y \xrightarrow{\beta} z$, $z \xrightarrow{\gamma} w$, where $\alpha$ is represented by a sequence $(a_1,\ldots,a_p)$ with sum $a = a_1+\cdots+a_p$ such that $y = x+a$, $\beta$ by $(b_1,\ldots,b_q)$ with sum $b$ such that $z = y + b$, and $\gamma$ by $(c_1,\ldots,c_r)$ with sum $c$ such that $w = z + c$. The composite $(\beta \circ \alpha)$ is represented by the concatenation $(a_1,\ldots,a_p, b_1,\ldots,b_q)$ which has net adjustment $a+b$ taking $x$ to $z$. Then $(\gamma \circ (\beta \circ \alpha))$ corresponds to $(a_1,\ldots,a_p, b_1,\ldots,b_q, c_1,\ldots,c_r)$ with net adjustment $a + b + c$ taking $x$ to $w$.\newpage On the other hand, $(\gamma \circ \beta) \circ \alpha$ corresponds to first concatenating $\beta, \gamma$ to get $(b_1,\ldots,b_q, c_1,\ldots,c_r)$ with net $b+c$ from $y$ to $w$, then concatenating $\alpha$ in front to get $(a_1,\ldots,a_p, b_1,\ldots,b_q, c_1,\ldots,c_r)$. The resulting sequence is identical. In terms of addition, Axiom 2 gives $(x + a) + b = x + (a + b)$ and further $(x + (a+b)) + c = x + (a + b + c)$, which exactly assures associativity of composition: $\gamma \circ (\beta \circ \alpha) = (\gamma \circ \beta) \circ \alpha$ as morphisms from $x$ to $w$.

\item \textbf{Existence of Morphisms:} By definition, for any single-step $a \in A$ and any $x \in X$, we have a morphism $x \to x+a$. While the category might not be \textit{complete} (not every pair of objects is necessarily connected by a morphism if some states are not reachable from others), it still satisfies the category axioms with the collection of all actually realizable morphisms. If reachability is reflexive (which it is via identity) and transitive (by composition), we indeed have a category structure.
\end{itemize}

Thus $\mathcal{C}_{\mathcal{A}}$ is a well-defined small category. Note that if adjustments in $A$ are invertible (i.e. if $A$ were a group under +), then $\mathcal{C}_{\mathcal{A}}$ would in fact be a groupoid (every morphism invertible). In general, $A$ being only a monoid yields a category that may have irreversible morphisms (which is typical for processes that have a direction of time or increasing entropy, etc.).
\end{proof}

\textbf{Discussion:} The significance of Theorem 2 is that \textit{category theory is automatically present} in Alpay Algebra. We did not have to impose category axioms externally; they emerge from the basic process of composing adjustments. Each Alpay Algebra carries a category $\mathcal{C}_{\mathcal{A}}$ of its states and transition paths. In this sense, Alpay Algebra internalizes the notion of morphisms as advocated by Mac Lane – the "morphisms" here are just the possible state transitions via adjustments. The category $\mathcal{C}_{\mathcal{A}}$ encapsulates how one state can be transformed into another through a sequence of intermediate steps. This category structure will allow us to interpret functorial relationships and even to simulate other categories inside Alpay Algebra.

An immediate corollary of this categorical perspective is that many basic category-theoretic constructions can be mirrored in Alpay Algebra:

\begin{itemize}[itemsep=0.5\baselineskip]
\item The \textbf{composition law} in the category corresponds to addition of adjustments. It is associative and has identities by virtue of the monoid action (Axiom 2).

\item A \textbf{functor} between two Alpay categories $\mathcal{C}_{\mathcal{A}_1}$ and $\mathcal{C}_{\mathcal{A}_2}$ would essentially be given by a function $F: X_1 \to X_2$ on states that maps transitions to transitions, respecting composition. Such a functor $F$ would have to satisfy $F(x +_1 a) = F(x) +_2 a'$ for some corresponding adjustment $a'$ in $A_2$. In essence, a functor is an Alpay Algebra homomorphism (a map preserving $\phi$ and $+$ operations).

\item \textbf{Natural transformations} between such functors could similarly be described in terms of adjustments that relate two different images of states in a coherent way. While we will not delve deeply into higher categorical analogs here, it's clear the language of category theory can be translated into the language of states and adjustments.
\end{itemize}
\vspace{1em} 
\begin{theorem}[Existence of Invariants and Homological-like Structures]
\textit{In an Alpay Algebra, define a \textbf{cycle} as a sequence of states (and accompanying adjustments) that returns to the starting state, and a \textbf{boundary} as a sequence that is equivalent (in net effect) to a trivial transformation. More formally:}

\begin{itemize}[itemsep=0.3\baselineskip]
\item \textit{An \textbf{$n$-cycle} (with $n \geq 1$) is a sequence $(a_1, a_2, \ldots, a_n)$ of adjustments such that $0 = a_1 + a_2 + \cdots + a_n$. Equivalently, starting from some state $x$, if you apply $a_1, a_2, \ldots, a_n$ in succession, you return to $x$ (because $x + (a_1+\ldots+a_n) = x + 0 = x$). We call the state $x$ in this situation a \textbf{cycle base}; however, note that by definition of cycle, any state along the cycle can serve as a base since the sum of the adjustments from that point will also be zero when completing the loop.}

\item \textit{A \textbf{boundary} is a sequence of adjustments $(a_1,\ldots,a_m)$ that sums to zero \textit{in a trivial way}, meaning there exists a finer decomposition or homotopy within the algebra that obviously cancels out. For instance, if some consecutive portion of the sequence forms an immediately inverse pair $b, -b$ (assuming an inverse exists within extended $A$) or if $a_1 + \cdots + a_m$ simply explicitly equals 0 by cancellation. Intuitively, boundaries are loops that can be shrunk to a point or undone without leaving the framework of allowed adjustments.}

\item \textit{We say two $n$-cycles are \textbf{homologous} if their difference is a boundary, i.e., concatenating one cycle with the inverse of the other yields a sequence that cancels out to triviality.}
\end{itemize}

\textit{Then the set of equivalence classes of cycles under homology forms an Abelian group (under the concatenation operation). This group can be regarded as the \textbf{$1$st homology group} $H_1(\mathcal{A})$ of the Alpay Algebra's state-transition complex. Moreover, higher homology groups $H_k(\mathcal{A})$ can be defined by considering $k$-dimensional analogs of cycles (e.g., states arranged in $k$-dimensional grids of transitions) and their boundaries. These homology groups measure the presence of "holes" or independent loops in the space of states reachable by adjustments.}
\end{theorem}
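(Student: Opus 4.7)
The plan is to build $H_1(\mathcal{A})$ as an explicit quotient of a concatenation monoid, and then outline higher homology via a chain complex. I would let $Z_1$ denote the monoid of finite adjustment sequences $(a_1,\ldots,a_n)$ with $a_1 + \cdots + a_n = 0$, under concatenation; the empty sequence is the identity, and closure under concatenation is automatic since $0 + 0 = 0$ in $A$. For the relation of homology, I would take $\sim$ to be the smallest concatenation-congruence on $Z_1$ generated by three elementary moves: (i) permuting two adjacent entries, licit by the commutativity clause of Axiom 1; (ii) deleting an adjacent pair $(b,b')$ whenever $b + b' = 0$; and (iii) cyclic rotation, justified by the remark in the theorem statement that any state along the loop can serve as basepoint. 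Then $H_1(\mathcal{A}) := Z_1 / \sim$.

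To verify the abelian group axioms, associativity and the identity class come for free from the concatenation monoid structure. For inverses, given a cycle $\alpha = (a_1,\ldots,a_n)$ whose total sum is $0$, I would exhibit a candidate inverse (the reversal or, more robustly, the image under formal negation in the Grothendieck group, see below) and show that the concatenation reduces to the empty sequence by iterated applications of moves (i)--(iii). Commutativity of $H_1(\mathcal{A})$ then follows by a standard shuffle argument: concatenation $\alpha \cdot \beta$ can be transformed into $\beta \cdot \alpha$ through a finite chain of adjacent swaps, each swap being an instance of move (i).

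For higher $k$, I would introduce a cubical chain complex in the style of the bar resolution. Let $C_k$ be the free abelian group on $k$-tuples $[a_1 \mid \cdots \mid a_k]$ of adjustments that are composable along some trajectory in $X$, with boundary map $\partial_k : C_k \to C_{k-1}$ defined as the alternating sum of the interior face maps $d_i [a_1 \mid \cdots \mid a_k] = [a_1 \mid \cdots \mid a_i + a_{i+1} \mid \cdots \mid a_k]$ together with the two outer faces that drop the first or last entry. The verification of $\partial_{k-1} \circ \partial_k = 0$ then reduces to the well-known combinatorial cancellation in the bar complex of a commutative monoid, using only associativity and commutativity of $+$ on $A$. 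Setting $H_k(\mathcal{A}) := \ker \partial_k / \operatorname{im} \partial_{k+1}$ yields the desired higher homology.

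The main obstacle throughout is that $A$ is only a commutative monoid, not a group, so the symbol $-b$ invoked in the definition of boundary is not literally available within $A$, and reversing a cycle cannot in general produce an internal cancelling partner. I would resolve this by passing from $A$ to its Grothendieck group $\mathrm{Gr}(A)$, which exists and is universal precisely because $A$ is commutative. All cycle and boundary computations then take place inside the abelian group $\mathrm{Gr}(A)$, so that the quotient $H_k(\mathcal{A})$ is well-defined, functorial in $\mathcal{A}$, and reduces to the expected homology whenever $A$ already happens to be a group. This is the one step where the commutativity assumption in Axiom 1 is genuinely indispensable rather than merely convenient.
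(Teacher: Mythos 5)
The genuine gap is your inverse step, and neither of your two proposed remedies repairs it. The reversal of a cycle is not an inverse under moves (i)--(iii), even when $A$ is a group: take $A=\mathbb{Z}^2$, $a=(1,0)$, $b=(0,1)$, $c=(-1,-1)$, $\alpha=(a,b,c)$. Concatenating $\alpha$ with its reversal yields the multiset $\{a,a,b,b,c,c\}$, no two members of which sum to $0$, so move (ii) never fires; and since a congruence is symmetric, insertions of cancelling pairs are also available, but they do not help either: the assignment $(a_1,\ldots,a_n)\mapsto\sum_i [a_i]$, valued in the free abelian group on $A\setminus\{0\}$ modulo the relations $[u]+[v]$ for $u+v=0$, is invariant under all three moves and takes the nonzero value $2[a]+2[b]-2[a{+}b]$ on $\alpha$ followed by its reversal. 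Your fallback of computing in $\mathrm{Gr}(A)$ is worse: it changes the theorem rather than proving it. When $A$ is not cancellative, $A\to\mathrm{Gr}(A)$ is not injective, so non-cycles become cycles (if $e+e=e$ with $e\neq 0$, then $e=0$ in $\mathrm{Gr}(A)$, and the one-step sequence $(e)$ is declared a cycle even in a model with $x+e\neq x$), and for $A=\{0,e\}$ one gets $\mathrm{Gr}(A)=0$ and total collapse; the output is an invariant of $\mathrm{Gr}(A)$, not of the state-transition complex the theorem names. Neither detour is needed: given $\sum_i a_i=0$, set $\beta_i:=\sum_{j\neq i}a_j$, so that $a_i+\beta_i=0$ and $\sum_i\beta_i=(n-1)\cdot 0=0$; then $\beta=(\beta_1,\ldots,\beta_n)\in Z_1$, and after swaps each pair $(a_i,\beta_i)$ deletes, exhibiting an inverse inside $A$ itself. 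Note that the paper's own proof never meets your obstacle because it linearizes instead of concatenating: it takes $C_0$ free abelian on states, $C_1$ free abelian on transitions with $d_1(x\xrightarrow{a}y)=y-x$, and $H_1=\ker d_1/\operatorname{im} d_2$, so formal negatives live in the chain groups while adjustments stay in the monoid, and abelianness of $H_1$ is inherited from freeness of $C_1$ rather than proved from rewriting moves.

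Your higher homology also drifts from the statement: the bar complex of tuples $[a_1\mid\cdots\mid a_k]$ computes the monoid homology of $A$ (the homology of its classifying space) and is blind to $X$ --- indeed your composability condition is vacuous, since $+:X\times A\to X$ is total --- and in degree zero it forces $H_0=\mathbb{Z}$ always, whereas the paper's complex has $C_0=\mathbb{Z}X$ precisely so that $H_0$ can detect disconnected components of the state space (the paper leans on this later, in Conjecture 4). Two smaller corrections: $\partial_{k-1}\circ\partial_k=0$ for the bar complex follows from the simplicial identities and needs only associativity, not commutativity; and once the Grothendieck passage is dropped, your closing claim that commutativity is ``genuinely indispensable'' there is mislocated --- commutativity is used, but in move (i) and in the pairing argument above. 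With the internal inverse supplied and the chain groups rebuilt over states and transitions rather than bare tuples of adjustments, your congruence presentation of $H_1$ would be a legitimately more explicit route than the paper's sketch.
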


\begin{proof}[Proof (Sketch)]
The structure described aligns with the idea of a chain complex. We can construct a chain complex $\ldots \to C_2 \xrightarrow{d_2} C_1 \xrightarrow{d_1} C_0$ where $C_0$ is the free Abelian group generated by states (0-dimensional cells), $C_1$ is the free Abelian group generated by transitions (1-dimensional cells, basically the adjustments or directed edges between states), $C_2$ generated by formal 2-dimensional patches of states (if any) that might fill loops, etc. The boundary operator $d_k$ maps a $k$-chain (formal combination of $k$-cycles) to its boundary which is a $(k-1)$-chain. Because composition of adjustments is associative, one has $d \circ d = 0$ (any closed loop has no boundary, and the boundary of a boundary is zero). Then the homology group is defined as usual: $H_k = \ker(d_k) / \operatorname{im}(d_{k+1})$, the cycles mod boundaries.

For $H_1$: a 1-cycle is by definition something in $\ker(d_1)$, as it has net boundary 0. A boundary in 1-dimension is something that is $\operatorname{im}(d_2)$, meaning it is the boundary of a 2-dimensional filling. In an Alpay Algebra, if every loop can be continuously deformed (via intermediate states) to triviality, then $H_1$ would be zero. But if there are fundamental cycles that cannot be so deformed (for example, a cycle that goes around an "irremovable hole" in state space), then those represent nontrivial homology classes.

Concatenation (followed by possible cancellation) of cycles corresponds to addition in $C_1$, and factors to well-defined addition on homology classes, making $H_1$ an Abelian group. The group is Abelian because concatenation of loops corresponds to addition of chains which is commutative in the chain complex (edges can be summed in any order formally). In fact, since $C_1$ is free Abelian on oriented edges, it's clearly Abelian, and the subgroup of cycles and boundaries are normal subgroups, so the quotient inherits a commutative group structure.
\newpage
In summary, $H_1(\mathcal{A})$ captures independent cycles in $\mathcal{C}_{\mathcal{A}}$ that are not boundaries. Higher $H_k$ can be similarly constructed if one identifies $k$-dimensional features (which would require the existence of $k$-dimensional "sheets" of states whose boundary is a $(k-1)$-cycle). In a discrete system of states and adjustments, nontrivial higher homology might appear if the state graph has higher-dimensional algebraic topology (for example, if there are states and transitions forming surfaces). Such structures could emerge if one treats some families of adjustments as commuting squares, etc., effectively creating 2-dimensional cells. For brevity, we do not enumerate those here, but conceptually one can extend the chain complex notion.
\end{proof}

Theorem 3 is stated in somewhat informal terms compared to our earlier theorems, reflecting the fact that homological algebra in this context is an analogy: we are \textit{interpreting} the directed graph (category) of an Alpay Algebra in topological terms. Nevertheless, the homology groups $H_k(\mathcal{A})$ are well-defined invariants of the structure of $\mathcal{A}$. They measure the "holes" in the state space connectivity under adjustments. For example, if $H_1(\mathcal{A}) \neq 0$, it means there are loops of state transformations that cannot be contracted to a point via any sequence of allowed adjustments. In classical algebraic topology, $H_1$ would correspond to the abelianization of the fundamental group of a space. Here, since $\mathcal{C}_{\mathcal{A}}$ is like a state graph, the fundamental group $\pi_1$ of that graph (if one imagines it as a 1-dimensional CW complex) is essentially the collection of loops modulo deformation. $H_1$ then is roughly the abelianization of $\pi_1$.

One concrete implication: If an Alpay Algebra has nontrivial $H_1$, there are different "independent modes of oscillation" or repetitive transformations the system can perform. These might correspond to conserved quantities or symmetries (e.g., the presence of a cycle might indicate the system can oscillate indefinitely among certain states without ever settling – something like a limit cycle in dynamics, which from a homology perspective is a nontrivial 1-cycle).

From a homological algebra viewpoint, one could also consider \textit{exact sequences} within Alpay Algebra. For instance, if there is a sequence of sub-algebras or quotient algebras, one might get a long exact sequence of homology groups. While such considerations go deeper than we can fully formalize here, the key message is that Alpay Algebra can support homological reasoning \textit{internally}. We did not step outside to topological spaces or groups to define these homology groups; we defined them directly from states and adjustments. This hints at a rich internal structural theory: Alpay Algebra is capable of reproducing not only category theory but also concepts from algebraic topology and homological algebra, purely in its own terms.

\begin{definition}[Logical Semantics of $\Psi$ and Internal Logic]
We now turn to logic and topos-theoretic aspects. The evaluation function $\Psi: X \to E$ provides a graded or truth-valued assessment of states. In particular cases, one might interpret $\Psi(x)$ as "the degree to which $x$ satisfies a certain property" or "the truth value of a proposition at state $x$". This leads us to an internal logic for Alpay Algebra:

\begin{itemize}[itemsep=0.3\baselineskip]
\item We can designate a special evaluation structure for logic. For example, let $E = \{ \bot, \top\}$ (false < true). If we manage to interpret $\Psi(x) = \top$ as "state $x$ has a desired property" and $\Psi(x) = \bot$ as "it does not", then the adaptive rule $\phi$ can be seen as a procedure that tries to make a state satisfy the property (turn $\bot$ into $\top$). A fixed point $x^*$ with $\Psi(x^*) = \top$ would then mean a state that has the property and no further changes are needed – essentially a \textit{model} of that property.

\item More generally, if $E$ is not just a 2-element set but perhaps a richer lattice (like truth values in fuzzy logic or probabilities), $\Psi(x)$ might indicate the "extent" or "confidence" to which $x$ satisfies something. The iterative process then resembles successive approximation to truth.
\end{itemize}

In categorical logic (topos theory), an important role is played by the \textbf{subobject classifier}, which is an object $\Omega$ in a topos such that morphisms into $\Omega$ classify subobjects (analogous to the role of $\{0,1\}$ in $\mathrm{Set}$). By analogy, in Alpay Algebra, the evaluation set $E$ (or some derived object from $\Psi$) acts like a truth value object. Each state $x$ might have an associated predicate "$x$ is stable" or "$x$ satisfies property P" which is true or false depending on $\Psi(x)$. In fact, one natural internal predicate is \textit{"is $\phi(x) = 0$?"}, i.e., "$x$ is a fixed point". Define a property $P(x)$ which holds iff $\phi(x) = 0$. We can ask: is there a way to recover $P$ from $\Psi$? If $\Psi(x)$ attains a maximum (say $\top$) if and only if $\phi(x) = 0$, then $\Psi(x) = \top$ serves as the truth of $P(x)$. More broadly, we might have a threshold in $E$ that distinguishes when a state is "considered acceptable" or "sufficiently true".
\end{definition}
\newpage
One could formalize an internal logic where:

\begin{itemize}[itemsep=0.5\baselineskip]
\item Atomic propositions correspond to predicates on $X$ definable via $\Psi$ (or other derived quantities).
\item Logical connectives could potentially be interpreted via meet and join operations in the lattice $E$ (for example, the conjunction of two properties might correspond to the infimum of their truth values on a state).
\item Quantification might be interpreted via the existence of certain transitions or states (similar to how in a topos, $\forall$ and $\exists$ are related to adjoints of pulling back along subobject classifier).
\end{itemize}

While a full development of an internal logic in Alpay Algebra is beyond our current scope, we provide an example to illustrate how a logical structure might manifest:

\textbf{Example:} Suppose we want to encode a simple propositional logic problem in Alpay Algebra. Let the property $P$ be "a certain equation is satisfied" or "a certain invariant holds". We can design $\Psi(x)$ such that $\Psi(x)$ measures how close state $x$ is to satisfying that invariant. For concreteness, imagine $E = [0,1] \subseteq \mathbb{R}$ with $0$ meaning completely false and $1$ meaning fully true. The Alpay update rule $\phi$ might be constructed (by an external designer) to adjust $x$ in a direction that increases $\Psi(x)$ – effectively trying to make the proposition truer. The axioms ensure it won't decrease $\Psi$. If $\phi(x) = 0$ at some $x$, that likely means the invariant is perfectly satisfied ($\Psi(x)$ possibly equals $1$ or at least is locally maximal). So fixed points correspond to models of the proposition.

In this way, the search for a model (a solution state) is internalized as the search for a fixed point. Logical \textit{validity} (tautology) could be interpreted as "from any initial state, the process leads to a state satisfying the proposition" – a kind of completeness of the proof system embedded in $\phi$. Logical \textit{satisfiability} is simply the existence of some state (not necessarily reachable from all initial states) with $\Psi(x)=\top$.

This is analogous to how a topos's internal logic allows reasoning about its objects as if they were sets with elements satisfying properties. Alpay Algebra's internal logic allows reasoning about states and their eventual stability as assertions.\newpage One might say: "For all states $x$, there exists a state $y$ (the eventual state) such that $y$ is stable and $\Psi(y)=\top$" to assert that no matter where we start, we can eventually satisfy our criteria (a statement of completeness or solvability in the system). Or one could express conditions like "if $x$ has property Q then eventually $x$ will satisfy P", etc., which via $\phi$ and $\Psi$ become statements that can potentially be proven by induction on the iteration count.

In more category-theoretic terms, consider the category $\mathcal{C}_{\mathcal{A}}$ constructed in Theorem 2. It is a category with objects $X$ and certain morphisms. We might inquire whether $\mathcal{C}_{\mathcal{A}}$ is a topos or has topos-like structure. $\mathcal{C}_{\mathcal{A}}$ is typically not Cartesian closed nor does it have all limits, etc., so it's not a topos in itself. However, one could consider presheaves on $\mathcal{C}_{\mathcal{A}}$ or other category-theoretic constructions to build a richer environment. Interestingly, the category of \textit{all} Alpay Algebras (with suitable homomorphisms as arrows) might have a claim to be a topos or at least an extensive category of structures. Each Alpay Algebra is like a "model" in a certain theory. Formal logic can then be applied to the class of all such models. But keeping strictly internal: one path to a topos-like logic is to treat each state as a possible "world" and each adjustment as a transition, reminiscent of Kripke semantics in modal logic. Then one can interpret modal operators as well (for example, a necessarily operator $\square$ might indicate that in all reachable future states a property holds, whereas $\diamond$ might indicate there is some reachable state where it holds). Indeed, monotonic increase of $\Psi$ ensures a certain coherence in reachability (once something becomes true at a higher $\psi$ value, maybe it stays true or something akin to that if designed appropriately).

This discussion, although at a high level, demonstrates that \textbf{logic is not foreign to Alpay Algebra}. Rather, Alpay Algebra can \textit{host} logical assertions within its framework, with $\Psi$ playing the central role of a truth valuation. The entire iterative process can be seen as a proof search or model-finding procedure, which aligns with certain approaches in logic where fixed points define truth of inductive definitions. In fact, the concept of \textit{fixed-point logic} in theoretical computer science has similar flavor – formulas that are true in the limit of some iterative process.
\newpage
To summarize this section: from the basic axioms of Alpay Algebra, we have defined or observed:

\begin{itemize}[itemsep=0.5\baselineskip]
\item The iterative dynamics ($\chi_\lambda$) and its asymptotic outcome ($\Xi_\infty$, $\phi^\infty$).
\item The existence of fixed points and conditions ensuring convergence (Theorem 1).
\item The emergence of category structure and functorial interpretation (Theorem 2).
\item The possibility of defining homology-like invariants (Theorem 3).
\item The contours of an internal logic and topos-theoretic view via the evaluation function $\Psi$.
\end{itemize}

Each of these developments has been accomplished \textbf{without any external definitions} – they all stem from the primitives $X, A, +, \phi, \Psi$ and the axioms. This demonstrates the \textit{universality} of Alpay Algebra: it is rich enough to express the core ideas of multiple domains (algebra, category theory, topology, logic) within one coherent system. In the next sections, we delve a bit deeper into specific re-formulations of category theory and homological algebra in the language of Alpay Algebra, and then highlight new questions raised by this unified perspective.
\newpage
\section{Reformulation of Category Theory within Alpay Algebra}

Category theory, as Mac Lane emphasized, can serve as a foundational language for mathematics by focusing on relationships (morphisms) rather than elements. In Alpay Algebra, we showed in Theorem 2 that a category of states and transitions naturally exists for each algebra. We now make this reformulation more explicit and discuss how categorical concepts translate.

\subsection{Objects, Morphisms, and Functors}

\textbf{Objects and Morphisms:} The objects of the internal category $\mathcal{C}_{\mathcal{A}}$ are precisely the states of the Alpay Algebra ($\operatorname{Ob}(\mathcal{C}_{\mathcal{A}}) = X$). A morphism in $\mathcal{C}_{\mathcal{A}}$ from $x$ to $y$ is an element of the hom-set $\hom(x,y)$, which, as described, is given by an equivalence class of finite sequences of adjustments sending $x$ to $y$. (We can consider sequences equivalent if they have the same composite adjustment, since $A$ is a monoid; effectively $\hom(x,y)$ is either empty if $y$ is not reachable from $x$, or if it is reachable, then because adjustments can combine, there is at least one representing sum $a$ with $y = x + a$, but possibly many sequences that sum to that $a$ – those are all considered the same morphism in the category because they have the same effect by monoid laws.)

It's important to note that $\mathcal{C}_{\mathcal{A}}$ may not be a \textit{fully connected} category; it can be a disjoint union of components if some states cannot reach others. Each component behaves like a little category in its own right (with one particularly interesting component being one containing an initial state and all states reachable from it, which is relevant if we think of a single execution of the system).

\textbf{Functors:} A \textit{homomorphism of Alpay Algebras} can be defined as a map $F: X \to X'$ between the state sets of two algebras $\mathcal{A}$ and $\mathcal{A}'$ that respects the operations: $F(x + a) = F(x) +' F_A(a)$ and $F(\phi(x)) = \phi'(F(x))$ for some induced map $F_A: A \to A'$ on adjustments (which would necessarily send 0 to 0 and preserve addition). If such $F$ exists, it induces a functor $\mathcal{C}_{\mathcal{A}} \to \mathcal{C}_{\mathcal{A}'}$ between the categories, mapping each state $x$ to state $F(x)$ and each morphism (sequence of adjustments from $x$ to $y$) to the corresponding sequence of adjusted adjustments from $F(x)$ to $F(y)$.\newpage This functor will preserve composition due to the homomorphism property:
$F(y) = F(x + a) = F(x) +' F_A(a),$
so a sequence $x \xrightarrow{a_1} x_1 \xrightarrow{a_2} x_2 \ldots \xrightarrow{a_n} y$ will map to
$F(x) \xrightarrow{F_A(a_1)} F(x_1) \xrightarrow{F_A(a_2)} \ldots \xrightarrow{F_A(a_n)} F(y),$
which is exactly the image under $F$ of the original composition. This shows $F$ is indeed a functor and composition of such homomorphisms corresponds to composition of functors.

In the language of categories, $\mathcal{A}$ is like an algebraic theory and $\mathcal{C}_{\mathcal{A}}$ is a category it generates. Functors between these categories reflect structure-preserving maps between the underlying Alpay Algebras.

\textbf{Natural Transformations:} If we had two homomorphisms (functors) $F, G: \mathcal{A} \to \mathcal{A}'$, a natural transformation $\eta: F \Rightarrow G$ in categorical terms would assign to each state $x \in X$ a morphism in $\mathcal{C}_{\mathcal{A}'}$ from $F(x)$ to $G(x)$, satisfying a commuting square condition for each adjustment. In our context, this means for each state $x$ we choose some sequence of adjustments in $\mathcal{A}'$ that takes $F(x)$ to $G(x)$, call this resulting morphism $\eta_x: F(x) \to G(x)$. The naturality condition demands that for every $a \in A$ with $x' = x + a$, the following holds in $\mathcal{C}_{\mathcal{A}'}$:
$\eta_{x'} \circ F(a) = G(a) \circ \eta_x,$
where $F(a)$ denotes the morphism $F(x) \to F(x')$ induced by adjustment $a$ under functor $F$, and similarly for $G(a)$. Unwrapping this: starting at $F(x)$, applying $\eta_x$ takes us to $G(x)$, then applying the adjustment $G_A(a)$ (the image of $a$ under the adjustment map of $G$) reaches $G(x')$. On the other hand, starting at $F(x)$, first apply $F_A(a)$ to reach $F(x')$, then apply $\eta_{x'}$ to reach $G(x')$. These two resulting transitions must coincide as morphisms $F(x) \to G(x')$ in $\mathcal{A}'$. This is a coherence condition for the pair of homomorphisms. In essence, $\eta_x$ provides a systematic way to "correct" the difference between $F$ and $G$ on each state, in a manner compatible with the dynamics.

One context where natural transformations appear in Alpay Algebra is in comparing two different update rules or two different evaluation metrics, etc. But a full exploration of 2-categorical aspects (like transformations between functors) would lead us further afield. Suffice it to say that the categorical reinterpretation is consistent and robust: one can essentially do category theory inside Alpay Algebra.
\newpage
\subsection{Limits, Colimits and Representable Functors}

Within each $\mathcal{C}_{\mathcal{A}}$, one can ask about categorical limits and colimits:

\begin{itemize}[itemsep=0.5\baselineskip]
\item \textbf{Terminal object:} Is there a state $x$ such that for every state $y$ there is a unique morphism $y \to x$? In $\mathcal{C}_{\mathcal{A}}$, a terminal object would mean a state that every other state can reach in exactly one way. If our Alpay Algebra has a globally attracting fixed point $x^*$ (to which every state eventually evolves, and presumably along a unique optimal path), then $x^*$ would serve as a terminal object. Uniqueness of the morphism would require uniqueness of the adjustment sequence from any given $y$ to $x^*$. This uniqueness is not generally guaranteed (there could be multiple ways to reach the same state), but if the system has a kind of gradient-like property (no cycles and unique steepest ascent path), $x^*$ could be terminal. More often, $\mathcal{C}_{\mathcal{A}}$ might not have a single terminal object unless the algebra's dynamics are strongly convergent and directed.

\item \textbf{Initial object:} A state $x_0$ such that from $x_0$ there is a unique morphism to any state $y$. This would mean $x_0$ is an initial state from which any state is reachable by a unique sequence. This is also a strong condition, essentially a kind of tree structure of state space. If $A$ is like a generating set of moves that can reach any state from $x_0$ and the reachability graph is a tree, $x_0$ is initial. In practice, $\mathcal{C}_{\mathcal{A}}$ often has an initial object by construction if we specified a designated initial state, but uniqueness of morphisms to each target is usually false if there are multiple ways to do things.

\item \textbf{Pullbacks and products:} We can consider two states $y_1, y_2$ and their "product" if it exists, meaning a state $x$ with projections. This gets complicated in a general directed graph. Usually, $\mathcal{C}_{\mathcal{A}}$ won't have nontrivial products or pullbacks unless the algebra has some product-like structure built in. However, since Alpay Algebra is very general, one could conceive of a product of states as some state encoding an ordered pair (if the algebra's state space factorizes).
\newpage
\item \textbf{Representable Functors:} Many categories are understood by studying representable functors (Hom-functors). In $\mathcal{C}_{\mathcal{A}}$, $\hom(x,-): \mathcal{C}_{\mathcal{A}} \to \mathbf{Set}$ maps each state $y$ to the set of morphisms from $x$ to $y$ (essentially the reachable set from $x$, possibly with multiple arrows if we distinguished them). If $x$ is an initial state $x_0$, $\hom(x_0, y)$ is the set of all sequences of adjustments that lead from $x_0$ to $y$. This functor captures the "automaton" or process semantics of the algebra: it's like the language of all paths that reach a certain state. Representability or such functors being particularly nice (like presheaves that are representable) would reflect structure in the algebra, but analyzing that in general is complex.
\end{itemize}

The key takeaway is that $\mathcal{C}_{\mathcal{A}}$ is a structure where we can interpret essentially any concept from category theory \textit{if present}. We might say Alpay Algebra provides a \textit{meta-category} that can emulate other categories via appropriate configuration. For instance, one can try to simulate an arbitrary small category $\mathcal{D}$ inside an Alpay Algebra: let the states $X$ correspond to the objects of $\mathcal{D}$, and for each morphism in $\mathcal{D}$ introduce a unique adjustment in $A$ that realizes it. By setting up the addition in $A$ such that sequences of adjustments multiply according to composition in $\mathcal{D}$ (this might force $A$ to contain formal composites as distinct elements if $\mathcal{D}$ has complicated composition), we can attempt to enforce that $x + a = y$ iff in $\mathcal{D}$ there is the corresponding morphism $x \to y$. This essentially would encode $\mathcal{D}$ as a subcategory of $\mathcal{C}_{\mathcal{A}}$. However, making this rigorous requires carefully handling the monoid structure of $A$ and maybe taking a quotient to identify different sequences that represent the same $\mathcal{D}$-morphism. In any case, it's plausible that \textbf{any small category can be embedded in some Alpay Algebra's transition graph} (because we can treat the category's morphisms as our adjustments under suitable relations). If true, that means Alpay Algebra is \textit{categorically universal} in a certain sense: it can mimic any category you want by appropriate design. This is an important claim of expressive power – albeit not surprising given that Alpay Algebra is a kind of graph with structure, and any category is basically a graph with extra properties. We will not formalize this embedding theorem here, but conceptually:

\begin{claim}[Informal]
\textit{For every small category $\mathcal{D}$, there exists an Alpay Algebra $\mathcal{A}$ such that $\mathcal{C}_{\mathcal{A}}$ contains a subcategory isomorphic to $\mathcal{D}$.}
\end{claim}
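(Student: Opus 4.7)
The plan is to construct, for each small category $\mathcal{D}$, an explicit Alpay Algebra $\mathcal{A}(\mathcal{D})$ together with an identity-on-objects, faithful functor $F \colon \mathcal{D} \to \mathcal{C}_{\mathcal{A}(\mathcal{D})}$; the image of $F$ will be the required subcategory. First I would set $X := \operatorname{Ob}(\mathcal{D}) \sqcup \{*\}$, where $*$ is a fresh ``garbage'' state that absorbs any ill-typed application. Next, I would take the adjustment set $A$ to be the free monoid on the non-identity morphisms of $\mathcal{D}$, quotiented by the congruence generated by $f \cdot g \sim g \circ f$ whenever $\mathrm{cod}(f) = \mathrm{dom}(g)$ and $g \circ f$ is non-identity, together with $f \cdot g \sim 0$ whenever $g \circ f$ equals some $\mathrm{id}_x$; the empty word serves as $0_A$. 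The action $+ : X \times A \to X$ walks a representative word $(f_1, \ldots, f_n)$ letter by letter: at each step $f_i$ sends $\mathrm{dom}(f_i)$ to $\mathrm{cod}(f_i)$ and sends every other state to $*$, with $*$ absorbing. Associativity of composition in $\mathcal{D}$ makes the rewrite system confluent and lets both the monoid operation and the action descend cleanly through the quotient.

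To meet the remaining primitives, I would set $\phi \equiv 0_A$ and let $\Psi$ be constant on a one-point ordered set $E$, so that every state is a fixed point and Axioms~3--5 hold vacuously. Theorem~2 then produces $\mathcal{C}_{\mathcal{A}}$ with $\hom_{\mathcal{C}_{\mathcal{A}}}(x, y) = \{a \in A : x + a = y\}$. I would define $F$ by $x \mapsto x$ on objects, $\mathrm{id}_x \mapsto 0_A$, and $f \mapsto [f]$ for each non-identity morphism $f$. Functoriality reduces to the two generating congruence relations, while faithfulness follows because any reduced representative of length $\geq 2$ contains a non-composable junction and therefore, applied to any $x \in \operatorname{Ob}(\mathcal{D})$, lands in $*$ rather than in $\operatorname{Ob}(\mathcal{D})$. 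Consequently, for $x, y \in \operatorname{Ob}(\mathcal{D})$, the hom-set $\hom_{\mathcal{C}_{\mathcal{A}}}(x, y)$ is in bijection with $\hom_{\mathcal{D}}(x, y)$ via $F$, and the image of $F$ is the desired subcategory.

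The \emph{main obstacle} is the clash between Axiom~1's commutativity requirement on $A$ and the irreducibly non-commutative composition of a generic small category: one cannot arrange $[f] + [g] = [g] + [f]$ in $A$ while still keeping $g \circ f$ distinct from $f \circ g$. I would resolve this by invoking the paper's own remark following Axiom~1, which explicitly sanctions dropping commutativity; the proof of Theorem~2 and the construction of $\mathcal{C}_{\mathcal{A}}$ appeal only to the monoid and action axioms, both of which remain intact in the non-commutative setting. A secondary subtlety concerns identities: since the identity at $x$ in $\mathcal{C}_{\mathcal{A}}$ is necessarily the global unit $0_A$, the functor $F$ is forced to collapse every $\mathrm{id}_x$ to $0_A$, which is exactly why the congruence includes the rule $f \cdot g \sim 0$ whenever $g \circ f$ is an identity. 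Verifying that this congruence is well-defined, and that $0_A$ remains a two-sided unit in the quotient, reduces to the unit and associativity laws of $\mathcal{D}$, but it is the step where the argument most visibly pivots on the structure of $\mathcal{D}$ and must be executed with care to avoid collapsing the action.
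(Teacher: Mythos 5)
Your construction follows the same route the paper sketches just before the claim (states $=$ objects, one adjustment per morphism, a quotient of the free monoid so that concatenation matches composition), and in two respects you are more careful than the paper: you spot the clash with Axiom~1's commutativity and legitimately discharge it via the paper's own remark that commutativity may be dropped (Theorem~2 indeed uses only the monoid and action laws), and you make the hom-set bijection precise via the sink state. But the step you flag as merely needing ``care'' is in fact a genuine obstruction, and your claim that it ``reduces to the unit and associativity laws of $\mathcal{D}$'' is false. The congruence $f \cdot g \sim 0$ whenever $g \circ f$ is an identity is incompatible with your action: by Axiom~2 the class $0$ must fix \emph{every} state, yet the word $(f,g)$ sends every state other than $\mathrm{dom}(f)$ into the sink $*$. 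Concretely, let $\mathcal{D}$ have two objects $x \neq y$ and a single isomorphism $f \colon x \to y$. Then $(f, f^{-1}) \sim 0$, but $y + f = *$ and hence $y$ acted on by the word $(f, f^{-1})$ yields $* \neq y = y + 0$; the action does not descend to the quotient monoid, so your $\mathcal{A}(\mathcal{D})$ is not an Alpay Algebra at all. The obstruction is structural, not a bookkeeping issue: in any monoid action, $[f] + [g] = 0$ forces the action of $[f]$ to be \emph{injective} on $X$, whereas your letter map $\sigma_f$ collapses all ill-typed states to $*$ and is never injective once $X$ has three or more states. Note also that you cannot simply drop the offending relation, since the image of $F$ must be a subcategory, and the identity of $\mathcal{C}_{\mathcal{A}}$ at each object is necessarily $0$, so $g \circ f = \mathrm{id}_x$ really does force $[f] + [g] = 0$ in $A$.

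As written, then, your proof establishes the claim only for small categories in which $g \circ f = \mathrm{id}$ implies $f = g = \mathrm{id}$ (poset categories, free categories, and the like); there your rewriting system is terminating and confluent, reduced words of length $\geq 2$ do land in $*$, and the hom-set bijection goes through as you argue. Any $\mathcal{D}$ containing a nontrivial isomorphism between distinct objects, or even a non-identity split mono--epi pair, defeats the sink. A repair must replace the sink by auxiliary states on which left-invertible letters act injectively: for the two-object isomorphism, for instance, let $[f]$ act as a shift on a $\mathbb{Z}$-chain of states containing $x$ and $y$, so that the stabilizer of $x$ is trivial and $\hom(x,y)$ is still a singleton; in general one can try $X = \operatorname{Ob}(\mathcal{D}) \times M(\mathcal{D})$ modulo $(\mathrm{dom}(f), [f]+m) \sim (\mathrm{cod}(f), m)$ with the right-append action, where $M(\mathcal{D})$ is your quotient monoid, though this demands its own confluence argument to show distinct objects stay distinct. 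To be fair, the paper itself marks exactly this quotient as the delicate point and never resolves it -- which is presumably why the claim is labeled informal -- but your proposal, by committing to the sink action, turns that open point into a concrete failure rather than resolving it.
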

\newpage
If needed, one can prove this by construction, as sketched above. The existence of such an $\mathcal{A}$ indicates that Alpay Algebra is not limiting as a foundational framework – it can reproduce the structures of category theory at will.

\section{Homological Algebra and Topos Theory in Alpay Algebra}

In this section, we outline how more advanced structures – specifically those from homological algebra and topos theory – find analogs within Alpay Algebra. Much of this has been touched on already (Theorem 3 for homological aspects, and the logic discussion for topos aspects), but we summarize and add perspective.

\subsection{Homological Algebra Revisited}

Homological algebra deals with chain complexes (sequences of abelian group homomorphisms $\ldots \to C_{n+1} \xrightarrow{d_{n+1}} C_n \xrightarrow{d_n} C_{n-1} \to \ldots$ such that $d \circ d = 0$) and their derived invariants (homology groups). In Alpay Algebra, we identified how one can form a chain complex from the state transition graph:

\begin{itemize}[itemsep=0.3\baselineskip]
\item $C_0$ is formal $\mathbb{Z}$-linear combinations of states.
\item $C_1$ is formal $\mathbb{Z}$-linear combinations of transitions (edges).
\item $d_1: C_1 \to C_0$ maps each transition $x \xrightarrow{a} y$ to $(y - x)$ (thinking of it as a difference of the terminal and initial state, akin to boundary of an oriented 1-simplex).
\item The cycles $\ker(d_1)$ are formal sums of transitions whose boundary adds up to zero – this includes simple loops as well as combinations of loops.
\item The boundaries $\operatorname{im}(d_2)$ would come from 2-dimensional patches if any. If there is no explicit notion of 2-cells, $\operatorname{im}(d_2)$ might be 0 or one might formally introduce them to capture relations between loops.
\end{itemize}
\newpage
The first homology $H_1 = \ker(d_1)/\operatorname{im}(d_2)$ basically counts independent loops mod those that bound a 2-cell. If our state graph had no 2-cells (like a general directed graph doesn't have 2D cells unless we identify some), then $\operatorname{im}(d_2)=0$ and $H_1$ is just $\ker(d_1)$, essentially the free abelian group on loops mod boundaries of trivial loops (but trivial loops in a graph with no 2-cells are just zero loops). So $H_1$ is basically the abelianization of the fundamental group of the graph. This fundamental group is basically the collection of loops from a basepoint mod deformation. In a directed graph, "deformation" normally requires 2-cells to allow homotopies, but we can allow homotopies along trivial steps (like going forward and backward along invertible edges if present). If the adjustments are invertible (group case), any loop can be contracted if the graph is a tree or has simple cycles that come from invertibility. But in non-invertible cases, $H_1$ could be quite complicated (like any directed cycle is nontrivial because you can't go backwards to cancel it unless another cycle allows cancellation).

From the perspective of \textit{homological algebra in the category of Alpay Algebras}, one could attempt to define derived functors or chain complexes of Alpay-homomorphisms, etc. That is an interesting direction: because Alpay Algebras form a category (with morphisms preserving structure), one might consider an Abelian category or at least an additive category of some sorts to do homological algebra. However, Alpay Algebras are highly non-linear structures, so constructing an Abelian category out of them is not straightforward (they are more like models of a universal algebra, which typically form a variety or a category that's not Abelian). But one can linearize certain aspects – for example, the chain complex we described is a kind of linearization of the state graph.

One might define a \textit{derived invariant} such as: given an Alpay Algebra, consider the complex $\mathbb{Z}X \xleftarrow{d_1} \mathbb{Z}E$ (where $E$ is the set of directed edges, i.e. transitions) and take its homology. If multiple layers of transitions (like 2-transitions bridging two different sequences) exist, incorporate them as higher $d$. This yields homology groups $H_n(\mathcal{A})$ that are invariants of the state connectivity. In analogy to simplicial homology, if we treat an Alpay Algebra's state graph as a simplicial complex of dimension 1 (unless extended), $H_0$ would correspond to connected components (since $H_0$ in graph homology is number of components minus 1 basically), $H_1$ to cycles, etc.
\newpage
We already gave interpretation to what a non-zero $H_1$ means: existence of independent cycles not explainable by trivial relations. $H_0$ being rank > 1 means disconnected pieces of state space (multiple components that don't communicate – which if $A$ cannot connect them, they truly are separate sub-Algebras). This is analogous to the algebra not being single-sourced.

If one had $H_2$ nonzero, it would suggest that there are states and transitions forming a closed surface-like structure – for example two different cycles that share boundaries forming a 2D hole. That could arise if the algebra had some commutative squares or higher-order symmetries.

We could conceive that advanced uses of Alpay Algebra might indeed build higher-dimensional structures (like we might allow not just adjustments but "adjustment-of-adjustment" operations, etc., effectively making a 2-category which would yield 2-cells). However, since our axioms did not include explicit higher composition, we stick to 1-dimensional homology as the primary notion.

Homological algebra often goes hand-in-hand with \textit{exact sequences} and \textit{extension problems}. For instance, one might have an exact sequence of Alpay Algebras (a sequence $0 \to \mathcal{A}_1 \to \mathcal{A}_2 \to \mathcal{A}_3 \to 0$ meaning $\mathcal{A}_1$ is a normal subalgebra of $\mathcal{A}_2$ and $\mathcal{A}_3$ is the quotient). In group theory or module theory, such sequences yield connecting homomorphisms in homology or cohomology. In Alpay Algebra, we can define quotient structures by identifying states and adjustments in certain ways, and substructures by restricting to a subset of states closed under the operations. If we manage to have an exact sequence, one could then get a long exact sequence in homology. This would be similar to how in topological or algebraic contexts, if you have a space that is made of pieces, you get relations between their homology groups.

Without going into more detail, the salient point is: Alpay Algebra inherently supports the \textit{language of homology}: concepts of cycles, boundaries, invariants, and exactness, within its own structural world.

\subsection{Logic and Topos Theory}

We have already discussed at length the logical interpretation of $\Psi$ and how one might see $\Psi(x) = \top$ as a truth of some predicate. To align with topos theory more explicitly, we consider how a topos is essentially a category that behaves like $\mathrm{Set}$ in many ways (it has exponentials, a subobject classifier, etc., and an internal logic of higher-order).

To approach a topos from Alpay Algebra, one strategy is:

\begin{itemize}[itemsep=0.5\baselineskip]
\item Consider the category of \textit{all} (or a certain class of) Alpay Algebras as objects, with homomorphisms as arrows. This category might be very large (a proper class), but one could restrict to small algebras or something. This category is likely to be quite complex (not obviously a topos), but it might have products (product of two Alpay Algebras can be defined component-wise: state set $X_1 \times X_2$, etc., and operations doing each independently). If such products exist and some subobject classifier exists, we might approach topos-likeness. The subobject classifier in this category could potentially be an algebra that encodes the truth values (like an algebra where states = \{false, true\} and $\Psi$ distinguishes them).

\item Instead, consider an internal approach: We want to see if $\mathcal{C}_{\mathcal{A}}$ or some related category has topos structure. $\mathcal{C}_{\mathcal{A}}$ as given is just a directed graph category, not cartesian closed, etc. But maybe consider a category of \textit{sheaves} on $\mathcal{C}_{\mathcal{A}}$ or some form of presheaves. The category of presheaves $\hat{\mathcal{C}}_{\mathcal{A}} = [\mathcal{C}_{\mathcal{A}}^{\text{op}}, \mathrm{Set}]$ is a topos (any presheaf category is a topos). That means there is a way to interpret $\mathcal{C}_{\mathcal{A}}$'s structure logically by looking at presheaves on it. Each presheaf could be thought of as a varying set attached to each state with restrictions for transitions. Without digressing too far, the result is that topos theory \textit{can be applied} to Alpay Algebra via such categorical constructions. But doing so uses a lot of external category theory – which slightly violates the spirit of staying within the system.
\end{itemize}

Perhaps a more direct internal approach: We can try to identify within a single Alpay Algebra something akin to a \textit{Heyting algebra of truth values} and \textit{power object} structure. For instance:

\begin{itemize}[itemsep=0.3\baselineskip]
\item For a subset of states $U \subseteq X$, one could define a predicate $P_U(x)$: "$x \in U$". The truth of this predicate at $x$ is $\top$ if $x \in U$, $\bot$ if not. Now, as states change, we can track whether this predicate holds. If the update rule preserves or eventually leads into $U$, then it's an invariant or attractor.

\item The collection of all subsets of $X$ is like the power set (which in $\mathrm{Set}$ topos is the subobject classifier $\Omega$ when seen as indicator functions). But internally, we might not have \textit{all} subsets accessible – maybe only definable ones via $\Psi$ or $\phi$.

\item However, because we have a set of states $X$, nothing stops us (in ordinary meta-theory) from considering $\mathcal{P}(X)$. But inside the theory, if we restrict to what can be described by $\phi$ and $\Psi$, the subobjects we can talk about might be those like "the set of states with $\Psi(x) \geq r$" or "the set of states that eventually reach a fixed point" etc. Those are definable subobjects.
\end{itemize}

We can at least treat $(E, \le)$, the codomain of $\Psi$, as a kind of \textit{lattice of truth values}. Many topos semantics identify truth values with an internal lattice. If $E$ is Boolean (two-valued), the logic is classical. If $E$ is just an arbitrary lattice, the logic is Heyting (intuitionistic). So Alpay Algebra could be either intuitionistic or even fuzzy logic, depending on $E$.

\textbf{Fixed Points as Truth Makers:} In a topos, a true statement is one that holds in the terminal object. If our $\mathcal{C}_{\mathcal{A}}$ had a terminal object (a final state as earlier), then a proposition is true (globally) if it holds in that terminal state. In Alpay terms, if there's a unique final stable state, then the condition that eventually every trajectory reaches that state implies the truth of some final property. For example, "the system will halt in a stable configuration" is a proposition that can be true or false depending on if a fixed point exists globally.

We could formulate a "theorem" in the internal logic: \textit{If $\phi^\infty(x)$ exists for all $x$, then $\forall x \exists y: y = \phi^\infty(x)$ and $\phi(y)=0$. In words, every state has a convergent evolution to some fixed point.} This is a statement one can attempt to prove (in fact, we sort of did in Theorem 1 under conditions). If proven, it becomes part of the internal theory of Alpay Algebra that might correspond to something like completeness or termination.

Conversely, one can ask logical questions like: is the theory of Alpay Algebra decidable? Does the system eventually find a model for any given condition if one exists? These become analogues of satisfiability or completeness issues.

Without going further into abstract topos semantics, we conclude that:

\begin{itemize}[itemsep=0.5\baselineskip]
\item Alpay Algebra is capable of interpreting logical propositions as internal statements about states and their transformations.
\item The structure $(E, \le)$ plays the role of a truth value object, and $\Psi$ is akin to an "evaluation" or forcing of truth onto states.
\item The iterative process $\phi$ can be seen as an inference or proof step (increasing truth).
\item A fixed point where $\Psi(x)=\top$ might be regarded as a state that realizes a certain theory or solves a problem.
\end{itemize}

In that regard, Alpay Algebra aligns with the idea that mathematics can be viewed as a \textit{self-improving system}. It resonates with Mac Lane's observation that mathematics can start with morphisms and processes, and also connects to Bourbaki's structural view that mathematics is a hierarchy of structures – here the hierarchy arises from iterative refinement and fixed points building on prior steps.

\section{Emergent Problems and Conjectures}

The development of Alpay Algebra as a universal structural framework raises numerous new questions and conjectures. These problems are "new" in the sense that they arise naturally from within the Alpay Algebra system and might not even be formulated in traditional frameworks. Below we outline some prominent conjectures and open problems that emerge from our study:

\begin{itemize}[itemsep=0.5\baselineskip]
\item \textbf{Conjecture 1 (Universality Conjecture):} \textit{Every mathematical structure or theory can be faithfully embedded or interpreted within Alpay Algebra.} In other words, for any consistent set of axioms or any category of mathematical objects, there exists an Alpay Algebra whose internal structures mirror that theory. We have seen evidence for categories and homological structures; this conjecture extends it to all of algebra (rings, fields, etc. as special cases of states with suitable adjustments), geometry (perhaps via states representing geometric configurations and adjustments transformations like morphisms), analysis (states could encode functions or sequences, with adjustments as operations like taking derivatives or limits), and so on. The conjecture formalizes the idea that Alpay Algebra is \textit{foundationally complete}. A proof would likely involve constructing an Alpay Algebra for each structure (similar to how set theory can construct models of any algebraic structure).\newpage A corollary would be that Alpay Algebra is at least as consistent as any of those theories (relative consistency), positioning it as a candidate foundation for all of mathematics.

\item \textbf{Conjecture 2 (Consistency and Completeness):} \textit{The axioms of Alpay Algebra are consistent (free of contradiction) and perhaps even complete with respect to a natural semantics.} Consistency is a minimum requirement: there should exist a model of Alpay Algebra (besides the trivial one) to show no contradictions arise from the axioms. Given the universal ambition, one would want to ensure Alpay Algebra doesn't accidentally encode a paradox. This likely requires a model construction (maybe using set theory or category theory externally to build an example). Completeness here is more speculative: it would mean that any statement in the language of Alpay Algebra that is true in all models (all Alpay Algebras) is provable from the axioms. Achieving completeness is rare for theories powerful enough to interpret arithmetic (Gödel's incompleteness would likely apply if Alpay Algebra can encode arithmetic or set theory). Thus, the conjecture might only hold for a fragment or be outright false; still, it's an open problem to delineate the logical strength of Alpay Algebra.

\item \textbf{Conjecture 3 (Fixed-Point Uniqueness and Attractor Conjecture):} \textit{In any Alpay Algebra satisfying mild conditions (e.g., $E$ well-ordered or complete, $A$ commutative, etc.), the system has a unique ultimate fixed point that is reached from any initial state.} This is a strong global convergence conjecture. It posits that all trajectories lead to a single stable state $x^*$ (which would necessarily be a global maximum of $\Psi$). If true, this means the algebra behaves like a globally optimizing system. This is reminiscent of dynamical systems conjectures where a system might have a single attractor. However, as a general statement it's likely false without additional conditions (one can construct counterexamples where there are multiple local optima and which one you reach depends on initial state). So a refined version could be: classify the basins of attraction and understand conditions under which a unique global attractor exists. This ties into potential \textit{order theory problems} – e.g., ensuring $(X, \prec)$ is a directed complete partial order with $\phi$ as a Scott-continuous function might give a unique least fixed point by Tarski's theorem, etc.\newpage The conjecture invites exploration of conditions that guarantee uniqueness or at least existence of a natural maximal element.

\item \textbf{Conjecture 4 (Alpay Homology and Cycle Conjecture):} \textit{The homology groups $H_n(\mathcal{A})$ of an Alpay Algebra (as defined in §Homological Structures) are trivial for all $n \geq 1$ if and only if the Alpay Algebra has no nontrivial cycles (i.e., it is acyclic as a directed graph of state transitions). Moreover, $H_0$ (which measures disconnected components) is trivial (rank 1) if and only if the algebra's state space is connected under reachability.} This conjecture is more of a verification of intuition than a surprising claim – it connects the homological invariants to dynamic properties. One direction is straightforward: no cycles implies $H_1=0$, disconnected implies $H_0$ has rank > 1, etc. The converse (if $H_1=0$ then no cycles exist) would mean any loop can be contracted, presumably by some sequence of adjustments (which maybe form a 2-cell). In an arbitrary directed graph, $H_1=0$ does not strictly imply no cycle; it implies every cycle is a boundary of something. So this becomes a conjecture about the structure of adjustments: it suggests that if the algebra has the algebraic property that all cycles come from some higher-order relation, then effectively there is no irreducible cycle. Proving such statements might rely on the exactness of certain sequences or a deeper property of how $A$ generates relations.

\item \textbf{Conjecture 5 (Logical Internal Soundness):} \textit{Any sentence in the internal language of Alpay Algebra that can be formulated about eventual stability or truth values can be decided by analyzing the $\phi$ and $\Psi$ behavior.} For example, consider a statement: "For all states $x$, eventually $\Psi(x_n) = \top$" (meaning the system eventually reaches truth). This is either true or false depending on the algebra. The conjecture would be that such statements reduce to checking some fixed points or cycles. Essentially, it's conjecturing a form of \textit{model checking} within the algebra is feasible. This is less of a crisp mathematical conjecture and more of a research direction: developing an internal proof calculus or model-checking algorithm for Alpay Algebra. Perhaps a better phrasing: \textit{the internal logic of Alpay Algebra is sound and (relative to the axioms) complete for statements about reachability and stability.} This is analogous to temporal logic in computer science (where one can decide if a program will terminate or satisfy a condition).\newpage It's known many such problems are undecidable in general, so any hope of decidability would require constraints (like restricting to certain classes of $A$ or $E$). Nonetheless, from a foundational perspective, it's an invitation to explore a proof theory for Alpay Algebra.

\item \textbf{Open Problem (Alpay Algebra Classification):} \textit{Classify all simple Alpay Algebras.} By "simple" we mean those that are indecomposable and have no nontrivial congruences (analogous to simple groups or rings). Are there analogs of simple groups in this context? Perhaps an algebra where the only adjustments are either trivial or generate the whole state space. If $\Psi$ is also involved, maybe a simple algebra is one where there are no proper $\Psi$-invariant substructures. Solving this could unify classical classification results (since groups, rings, etc., if embedded, might correspond to particular Alpay Algebras; their classification is often hard).

\item \textbf{Open Problem (Category-of-Alpays Structure):} \textit{Study the category $\mathbf{Alpay}$ of all Alpay Algebras with homomorphisms. Does it have limits, colimits, a well-behaved subobject classifier, etc.?} This is more meta, but it ties to the topos idea. For instance, if $\mathbf{Alpay}$ were a topos, that would be remarkable (likely it's not, because the objects are very algebraic and the morphisms are strict structure-preserving maps, which rarely yield a topos of all models). However, it might be Cartesian closed or have some properties like that. Understanding this category can shed light on how versatile these structures are and how one might build new ones from old (like product of two Alpay Algebras might correspond to independent product of dynamics; coequalizer might correspond to identifying states under some relation, etc.). One concrete sub-problem: given two Alpay Algebras with the same evaluation lattice $E$, can we "glue" them along a common subalgebra to get a pushout? This would correspond to taking two systems and merging them along a shared part. The answer would inform how flexible the composition of systems is in this framework.
\end{itemize}
\newpage
The above conjectures and problems illustrate that Alpay Algebra opens up many avenues for exploration. Some are broad and ambitious (like universality and uniqueness conjectures), others are more technical and structural (like homology exactness or category properties). What unites them is that they can be posed entirely in the language of Alpay Algebra – demonstrating that the system is not only capable of expressing existing mathematics, but also fertile in generating new mathematical questions of its own.

\section{Conclusion}

We have developed Alpay Algebra as a comprehensive, axiomatic framework capable of encompassing a wide array of mathematical theories. Starting from a minimalist set of primitives – states, adjustments, an update rule, and an evaluation order – we showed that one can recover the essence of algebraic structures, category theory, homological invariants, and logical semantics \textbf{entirely from within the system}. The style of presentation has been deliberately abstract and formal, in line with Bourbaki's structural rigor, yet we have endeavored to maintain clarity of motivation and interpretation as championed by Mac Lane.

Alpay Algebra portrays mathematics as an evolving process. In this view, a mathematical structure is not a static set of elements with relations, but a \textit{dynamic algebra} that continually transforms and refines itself. Fixed points in this algebra correspond to classical mathematical truths or solutions; categorical compositions correspond to chaining transformations; homological cycles correspond to symmetries or conserved quantities; and logical truths correspond to stable properties that the system eventually manifests. Such a perspective is powerful: it suggests that solving a mathematical problem (finding a theorem or constructing an object) can be seen as reaching a fixed point in an appropriate Alpay Algebra. The universality of the framework means that, given any problem, one could in principle encode it into an Alpay Algebra whose evolution embodies the search for a solution.

By internalizing various domains, Alpay Algebra serves as a \textit{unifying foundation}. It does not require us to start in set theory or in type theory or any other prior foundation – one starts in Alpay Algebra itself. The development in this paper has been self-contained: we built the needed concepts from scratch (only drawing occasional analogies to classical notions for the reader's intuition). This self-containment demonstrates a key point: \textbf{Alpay Algebra is logically self-sufficient} and structurally rich enough to be considered a candidate for a foundational system.

What are the implications of adopting Alpay Algebra as a foundational language? For one, it may simplify the transfer of results between fields. A theorem proved about fixed points in Alpay Algebra can immediately translate to statements in order theory, in domain theory, or in theoretical computer science (where fixed-point theorems are crucial, e.g., in semantics of programming languages). Results about cycles and homology in Alpay Algebra could inform algebraic topology or network theory. The categorical formulation we provided means any categorical argument (like a diagram chase or a functorial construction) has an interpretation in terms of state transformations, potentially making abstract category theory more concrete by grounding it in "processes".

Furthermore, Alpay Algebra's emphasis on recursion and self-improvement resonates with modern computational perspectives. It provides a natural language for algorithms (an algorithm updates a state until a condition is met), for machine learning (an iterative process improving performance $\Psi$), and for logical AI (searching for a model that satisfies a theory). Thus, beyond pure mathematics, Alpay Algebra could interface with theoretical computer science and systems theory seamlessly.

In conclusion, we have presented Alpay Algebra as an \textit{abstract universal framework} that not only subsumes existing mathematical structures but reframes them in a new light. Much like Bourbaki's Elements sought to rebuild mathematics from set-theoretic structures, and Mac Lane showed the organizational power of category theory, Alpay Algebra aspires to be a foundational "language of change" underpinning all of mathematics. Its potential is evidenced by the successful reconstruction of diverse domains within it and by the stimulating new problems it introduces.
\newpage
The work here is only a beginning. Moving forward, each domain-specific translation invites deeper exploration: algebraic geometry within Alpay Algebra might lead to a novel understanding of schemes as fixed points of functorial evolutions; topos theory within Alpay Algebra could yield new models of intuitionistic logic; even number theory might be recast by considering iterative processes (for example, one could study prime generation as a fixed point problem of certain dynamics). The capacity to carry these ideas out rests on the foundational robustness of Alpay Algebra.

The future research directions outlined reflect a broad landscape – one where the boundaries between separate fields blur within a single algebraic universe. By providing a common structural and process-oriented foundation, Alpay Algebra could foster greater synthesis in mathematics, allowing insights from one domain to transfer naturally to another. In this sense, it not only \textit{elevates} itself as a foundational framework but also elevates our perspective on mathematics: from a static collection of truths to a dynamic, ever-unfolding process of discovery.

\end{document}